\theoremstyle{plain}
\newtheorem{theorem}{Theorem}[section]
\newtheorem{lemma}[theorem]{Lemma}
\theoremstyle{definition}
\theoremstyle{remark}
\numberwithin{equation}{section}
\numberwithin{theorem}{section}
\newcommand{\Z}{\mathbb{Z}}
\newcommand{\R}{\mathbb{R}}
\newcommand{\C}{\mathbb{C}}
\newcommand{\Ot}{\operatorname{O}}
\newcommand{\SO}{\operatorname{SO}}
\newcommand{\Iso}{\operatorname{Iso}}
\newcommand{\so}{\mathfrak{so}}
\newcommand{\ba}{\backslash}
\title[Representation equivalent Bieberbach groups]
{Representation equivalent Bieberbach groups and strongly isospectral flat manifolds}
\author{Emilio A.\ Lauret}
\address{Facultad de Matem\'atica Astronom\'ia y F\'isica (FaMAF), Universidad Nacional de C\'ordoba,  Medina Allende s/n, Ciudad Universitaria, X5000HUA, C\'ordoba, Argentina.}
\email{elauret@famaf.unc.edu.ar}
\subjclass[2010]{Primary 58J53; Secondary 22D10}
\keywords{representation equivalent, strongly isospectrality, compact flat manifolds}
\thanks{Supported by CONICET and Secyt-UNC}
\date{April 15, 2013}
\begin{document}

\maketitle

\begin{abstract}
Let $\Gamma_1$ and $\Gamma_2$ be Bieberbach groups contained in the full isometry group $G$ of $\R^n$.
We prove that if the compact flat manifolds $\Gamma_1\ba\R^n$ and $\Gamma_2\ba\R^n$ are strongly isospectral then the Bieberbach groups $\Gamma_1$ and $\Gamma_2$ are representation equivalent, that is, the right regular representations $L^2(\Gamma_1\ba G)$ and $L^2(\Gamma_2\ba G)$ are unitarily equivalent.
\end{abstract}

\section{Introduction}

Let $X=G/K$ be a homogeneous Riemannian manifold where $G=\Iso(X)$ is the full isometry group of $X$ and where $K\subset G$ is a compact subgroup.
Let $\widehat G$ denote the unitary dual group of $G$.
Let $\Gamma_1$ and $\Gamma_2$ be discrete cocompact subgroups of $G$ acting on $X$ without fixed points.
The right regular representation $R_{\Gamma_i}$ on $L^2(\Gamma_i\ba G)$ splits as a direct sum
\begin{equation}\label{eq1:L^2}
L^2(\Gamma_i\ba G)=\sum_{(\pi,H_\pi)\in\widehat G} n_{\Gamma_i}(\pi)\,H_\pi
\end{equation}
where all the multiplicities $n_{\Gamma_i}(\pi)$ of $\pi$ in $L^2(\Gamma_i\ba G)$ are finite and only countably many are not zero.
The groups $\Gamma_1$ and $\Gamma_2$ are called \emph{representation equivalent} if the representations $R_{\Gamma_1}$ and $R_{\Gamma_2}$ are equivalent, that is, $n_{\Gamma_1}(\pi)=n_{\Gamma_2}(\pi)$ for every $\pi\in\widehat G$.

A generalized version of Sunada's theorem (see \cite[\S3]{Go} and the references therein) says that if the groups $\Gamma_1$ and $\Gamma_2$ are representation equivalent, then the compact manifolds $\Gamma_1\ba X$ and $\Gamma_2\ba X$ are \emph{strongly isospectral}, that is, for any natural bundle $E$ of $X$ and for any strongly elliptic natural operator $D$ acting on  sections of $E$, the associated operators $D_{\Gamma_1}$ and $D_{\Gamma_2}$ acting on sections of the bundles $\Gamma_1\ba E$ and $\Gamma_2\ba E$ have the same spectrum.

One may ask whether the converse holds.
Actually, little is known about this problem.
In \cite{Pe1}, H.~Pesce proved that the converse is true for spherical space forms ($X=S^n$) and for compact hyperbolic manifolds ($X=H^n$).
Our goal is to complete the picture within the class of spaces of constant curvature, by extending Pesce's result to the flat case ($X=\R^n$).

In his proof, Pesce only used the isospectrality with respect to certain natural operators.
More precisely, for $(\tau,V_\tau)\in\widehat K$, one associates the vector bundle $E_\tau=G \times_\tau V_\tau$ (see~\S\ref{sec:proof}).
Thus, the Laplace operator acting on sections of $E_\tau$ induces the operator $\Delta_{\tau,\Gamma_1}$ and $\Delta_{\tau,\Gamma_2}$ acting on sections of $\Gamma_1\ba E_\tau$ and $\Gamma_2\ba E_\tau$ respectively.
We will say that the manifolds $\Gamma_1\ba X$ and $\Gamma_2\ba X$ are \emph{$\tau$-isospectral} if these operators have the same spectrum.

In our case, the isometry group of $X=\R^n$ is $G=\Ot(n)\ltimes\R^n$ and a discrete cocompact subgroup of $G$ acting without fixed points is usually called a \emph{Bieberbach group}.
We state the theorem in a way analogous to that in \cite{Pe1}.

\begin{theorem}\label{thm1:main}
Let $\Gamma_1$ and $\Gamma_2$ be Bieberbach groups contained in the full isometry group $G$ of $\R^n$.
The following assertions are equivalent:
\begin{enumerate}
  \item\label{item1:L^2}
  $\Gamma_1$ and $\Gamma_2$ are representation equivalent.
  \item\label{item1:strongly-iso}
  $\Gamma_1\ba\R^n$ and $\Gamma_2\ba\R^n$ are strongly isospectral.
  \item\label{item1:tau-iso}
  $\Gamma_1\ba\R^n$ and $\Gamma_2\ba\R^n$ are $\tau$-isospectral for every $\tau\in\widehat K$.
\end{enumerate}
\end{theorem}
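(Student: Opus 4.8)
The plan is to prove the cycle $(\ref{item1:L^2})\Rightarrow(\ref{item1:strongly-iso})\Rightarrow(\ref{item1:tau-iso})\Rightarrow(\ref{item1:L^2})$. The first implication is precisely the generalized Sunada theorem recalled in the introduction, see \cite{Go}, so nothing needs to be done there. For $(\ref{item1:strongly-iso})\Rightarrow(\ref{item1:tau-iso})$ it is enough to observe that $\Delta_{\tau,\Gamma_i}$ comes from a natural strongly elliptic operator on a natural Riemannian bundle: the projection $G\to G/K=\R^n$ is the bundle of orthonormal frames of flat $\R^n$, so $E_\tau=G\times_\tau V_\tau$ is the natural vector bundle associated with $\tau\in\widehat K=\widehat{\Ot(n)}$, and $\Delta_\tau$ (the connection Laplacian of the canonical flat connection, i.e.\ the operator induced by $-(\partial_1^2+\cdots+\partial_n^2)$) is natural and strongly elliptic; hence strong isospectrality specializes to $\tau$-isospectrality for every $\tau$.

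The core of the theorem is $(\ref{item1:tau-iso})\Rightarrow(\ref{item1:L^2})$, for which I would first make $\widehat G$ and the spectra of the $\Delta_{\tau,\Gamma_i}$ completely explicit. By the Mackey machine applied to $G=\Ot(n)\ltimes\R^n$ acting on $\widehat{\R^n}$, whose orbits are the origin and the spheres of radius $r>0$, one has
\[
\widehat G=\{\,\rho^{\flat}:\rho\in\widehat{\Ot(n)}\,\}\ \cup\ \{\,\pi_{r,\sigma}:r>0,\ \sigma\in\widehat{\Ot(n-1)}\,\},
\]
where $\rho^{\flat}$ is $\rho$ inflated along $G\to\Ot(n)$, and $\pi_{r,\sigma}=\operatorname{Ind}_{\Ot(n-1)\ltimes\R^n}^{G}(\sigma\boxtimes\chi)$ for any character $\chi$ of $\R^n$ of norm $r$, with $\Ot(n-1)\subset\Ot(n)$ the stabilizer of the first basis vector. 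Two routine computations with induced representations give: $\pi_{r,\sigma}|_{K}=\operatorname{Ind}_{\Ot(n-1)}^{\Ot(n)}\sigma$ and $\rho^{\flat}|_K=\rho$, whence $\dim\Hom_K(V_\tau,H_{\pi_{r,\sigma}})=\langle\sigma,\tau|_{\Ot(n-1)}\rangle$ and $\dim\Hom_K(V_\tau,H_{\rho^{\flat}})=\delta_{\rho,\tau}$ by Frobenius reciprocity; and the central element $\partial_1^2+\cdots+\partial_n^2\in Z(\mathcal U(\mathfrak g))$ inducing $\Delta_\tau$ acts by the scalar $-r^2$ on $\pi_{r,\sigma}$ and by $0$ on $\rho^{\flat}$. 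Plugging this into the $K$-isotypic decomposition $L^2(\Gamma_i\ba E_\tau)\cong\bigoplus_{\pi\in\widehat G}n_{\Gamma_i}(\pi)\,\Hom_K(V_\tau,H_\pi)$, on which $\Delta_{\tau,\Gamma_i}$ acts on the $\pi$-summand by the corresponding scalar, I obtain: the multiplicity of the eigenvalue $0$ of $\Delta_{\tau,\Gamma_i}$ equals $n_{\Gamma_i}(\tau^{\flat})$, and for every $r>0$ the multiplicity of $r^2$ equals $\sum_{\sigma\in\widehat{\Ot(n-1)}}n_{\Gamma_i}(\pi_{r,\sigma})\,\langle\sigma,\tau|_{\Ot(n-1)}\rangle$.

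Now assume $(\ref{item1:tau-iso})$. Comparing the first multiplicities over all $\tau\in\widehat{\Ot(n)}$ gives $n_{\Gamma_1}(\rho^{\flat})=n_{\Gamma_2}(\rho^{\flat})$ for every $\rho$, and comparing the second gives, for every $r>0$ and every $\tau\in\widehat{\Ot(n)}$,
\[
\sum_{\sigma\in\widehat{\Ot(n-1)}}\bigl(n_{\Gamma_1}(\pi_{r,\sigma})-n_{\Gamma_2}(\pi_{r,\sigma})\bigr)\,\langle\sigma,\tau|_{\Ot(n-1)}\rangle=0,
\]
all sums being absolutely convergent since they are differences of finite eigenvalue multiplicities. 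To conclude that $n_{\Gamma_1}(\pi_{r,\sigma})=n_{\Gamma_2}(\pi_{r,\sigma})$ for each individual $\sigma$ --- the supports in $\sigma$ need not be finite, already for flat tori --- I would use that the restriction map of representation rings $R(\Ot(n))\to R(\Ot(n-1))$ is surjective. Indeed, the standard representation $\C^n$ of $\Ot(n)$ is faithful and self-dual, so $R(\Ot(n))$ is generated as a ring by $[\C^n]$; since $[\C^n]|_{\Ot(n-1)}=[\C^{n-1}]+1$ and the class $1$ both lie in the image of restriction, so does $[\C^{n-1}]$, hence the whole subring it generates, which is all of $R(\Ot(n-1))$. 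Thus each $[\sigma_0]$ can be written as a finite integral combination $\sum_j a_j\,[\tau_j|_{\Ot(n-1)}]$; taking inner products with $[\sigma]$ gives $\delta_{\sigma_0,\sigma}=\sum_j a_j\langle\sigma,\tau_j|_{\Ot(n-1)}\rangle$, and substituting this into the displayed identity and interchanging the absolutely convergent sums yields $n_{\Gamma_1}(\pi_{r,\sigma_0})-n_{\Gamma_2}(\pi_{r,\sigma_0})=0$. Together with the equality of the $n_{\Gamma_i}(\rho^{\flat})$ this shows $n_{\Gamma_1}(\pi)=n_{\Gamma_2}(\pi)$ for all $\pi\in\widehat G$, i.e.\ $\Gamma_1$ and $\Gamma_2$ are representation equivalent.

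The step I expect to be the main obstacle is this last one, and the reason it goes through is that $G$ is taken to be the \emph{full} isometry group of $\R^n$: only then is the little group equal to $\Ot(n-1)$ and $R(\Ot(n))\to R(\Ot(n-1))$ surjective (the analogous restriction $R(\SO(n))\to R(\SO(n-1))$ is not onto, so the argument would genuinely fail for the orientation-preserving isometry group). A related technical point --- the reason I route the final deduction through surjectivity of restriction rather than through a finite linear-algebra or Stone--Weierstrass argument --- is that for fixed $r$ infinitely many of the $\pi_{r,\sigma}$ may occur with positive multiplicity in $L^2(\Gamma_i\ba G)$.
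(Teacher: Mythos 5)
Your reduction to (\ref{item1:tau-iso})$\Rightarrow$(\ref{item1:L^2}), your description of $\widehat G$, and your two spectral formulas (multiplicity of the eigenvalue $0$ of $\Delta_{\tau,\Gamma_i}$ equal to $n_{\Gamma_i}(\widetilde\tau)$, multiplicity of the positive eigenvalue attached to $r$ equal to $\sum_{\sigma}n_{\Gamma_i}(\pi_{r,\sigma})\langle\sigma,\tau|_{\Ot(n-1)}\rangle$) are exactly the paper's starting point, eq.\ \eqref{eq3:tau-spec}. Where you diverge is the final inversion of this system: the paper inverts it by induction on highest weights, exploiting that the branching $\Ot(n)\downarrow\Ot(n-1)$ is multiplicity free and interlacing (Lemmas \ref{lem3:lemma} and \ref{lem3:lemma2}), whereas you invoke surjectivity of the restriction homomorphism of representation rings $R(\Ot(n))\to R(\Ot(n-1))$ and a formal inversion; your handling of the possibly infinite support in $\sigma$ (finite $j$-sum against absolutely convergent $\sigma$-sums, each bounded by a finite eigenvalue multiplicity) is fine, and your remark that only the full orthogonal little group makes this work is the right sensitivity. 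However, your proof of the surjectivity itself has a genuine gap.

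The claim that $R(\Ot(n))$ is generated as a ring by the standard class $[\C^n]$ is false, and faithfulness plus self-duality does not imply it: it only gives that every irreducible occurs in some tensor power of $\C^n$, not that its class is a polynomial in $[\C^n]$. Concretely, $[\det]$ is not such a polynomial: a polynomial in the standard character takes equal values at $g=\operatorname{diag}\bigl(\left[\begin{smallmatrix}0&-1\\1&0\end{smallmatrix}\right],\mathrm{Id}_{n-2}\bigr)\in\SO(n)$ and $h=\operatorname{diag}(-1,\mathrm{Id}_{n-1})\notin\SO(n)$, which have the same trace $n-2$ but opposite determinants (for $\Ot(2)$ one sees the failure already in $\rho_1^2=\rho_2+1+\det$, where $\det$ cannot be split off). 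The same objection defeats the step ``the subring generated by $[\C^{n-1}]$ is all of $R(\Ot(n-1))$''. The surjectivity you need is nevertheless true, and can be repaired in two ways: (i) use the known presentation of $R(\Ot(m))$ as generated by the exterior powers $[\Lambda^k\C^m]$, $1\le k\le m$ (note $\Lambda^m\C^m=\det$), together with $\Lambda^k\C^n|_{\Ot(n-1)}=\Lambda^k\C^{n-1}\oplus\Lambda^{k-1}\C^{n-1}$ and a telescoping induction on $k$; or (ii) deduce it directly from Theorems \ref{thm2:branching-even}--\ref{thm2:branching-odd} by induction on the highest weight, since $\tau_{\Lambda,\delta}|_{\Ot(n-1)}$ contains the target $\sigma_{\mu_0,\kappa}$ exactly once plus only constituents with strictly smaller weight, and twisting by $\det$ reaches both values of $\kappa$. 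Option (ii) is precisely the triangularity the paper exploits, only carried out at the level of eigenvalue multiplicities rather than in the representation ring; with either repair in place your argument becomes a correct, mildly repackaged version of the paper's proof.
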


As we mentioned above, (\ref{item1:L^2})$\Rightarrow$(\ref{item1:strongly-iso}) is well known and (\ref{item1:strongly-iso})$\Rightarrow$(\ref{item1:tau-iso}) holds trivially.
It suffices to show (\ref{item1:tau-iso})$\Rightarrow$(\ref{item1:L^2}) (see page \pageref{proof:main-thm}).
The techniques used here are similar to those in \cite{LMR}, where the $p$-spectrum of any constant curvature space form is determined in terms of the multiplicities in \eqref{eq1:L^2}.

The author is greatly indebted to Juan Pablo Rossetti for suggesting the problem and for many stimulating conversations, and also to Roberto Miatello for his active interest in the publication of this paper.
The author also wishes to express his thanks to the referee for several useful comments and corrections.

\section{Preliminaries}

\subsection{Irreducible representations of orthogonal groups}\label{subsec:dual-ortho}
In this subsection we describe the unitary dual group of the orthogonal group $\Ot(n)$.
Furthermore, we recall the branching laws to $\Ot(n-1)$.

We write $n=2m$ if $n$ is even or $n=2m+1$ if $n$ is odd.
We fix the Cartan subalgebra of $\so(n,\C)$ as
$
\mathfrak h=\left\{H=\sum_{j=1}^m i h_j(E_{2j-1,2j} - E_{2j,2j-1}): h_j\in\C\right\}.
$
For $H\in\mathfrak h$, set $\varepsilon_j(H)=h_j$ for $1\leq j\leq m$.
The highest weight theorem gives a one-one correspondence between the irreducible representations of $\SO(n)$ and the elements in $\mathcal P(\SO(n))$, that is, the dominant analytically integral linear functionals on $\mathfrak h$.
The correspondence being that $\Lambda$ is the highest weight of the representation.
We have
\begin{align*}\label{eq2:P(SO(n))}
\mathcal{P}(\SO(2m+1))
    &= \left\{\textstyle\sum\limits_{i=1}^m  a_i\varepsilon_i:
        a_i\in\Z\;\forall i,\; a_1\geq a_2\geq\dots\geq a_m\geq0 \right\},\\
\mathcal{P}(\SO(2m))
    &= \left\{\textstyle\sum\limits_{i=1}^m  a_i\varepsilon_i:
        a_i\in\Z\;\forall i,\; a_1\geq\dots\geq a_{m-1}\geq |a_m| \right\}.
\end{align*}
For $\Lambda\in\mathcal P(\SO(n))$, let $(\tau_\Lambda,V_\Lambda)$ denote the irreducible representation of $\SO(n)$ with highest weight $\Lambda$.

We now describe the irreducible representations of the full orthogonal group $\Ot(n)$ in terms of the irreducible representations of $\SO(n)$.
We let
\begin{equation}\label{eq2:g_0}
g_0=
\begin{cases}
-\mathrm{Id}_{n} &\quad
    \text{if $n$ is odd,}\\[1mm]
\left[\begin{smallmatrix}
\mathrm{Id}_{n-1}&\\ &-1
\end{smallmatrix}\right]&\quad
    \text{if $n$ is even,}
\end{cases}
\qquad\text{thus }\Ot(n)=\SO(n)\cup g_0\SO(n).
\end{equation}
It suffices to define the representations of $\Ot(n)$ on each connected component.

We first consider $n$ odd.
For $\Lambda\in\mathcal P(\SO(2m+1))$ and $\delta=\pm1$ we define a representation $(\tau_{\Lambda,\delta},V_\Lambda)$ of ${\Ot}(2m+1)$ by setting
\begin{equation*}
\tau_{\Lambda,\delta}(g)(v)=
    \begin{cases}
      \tau_\Lambda(g)(v)     &\;\text{if }g\in\SO(2m+1),\\
      \delta\,\tau_\Lambda( g_0g)( v) &\;\text{if }g\in g_0\SO(2m+1).
    \end{cases}
\end{equation*}
Clearly $\tau_{\Lambda,\delta}|_{\SO(2m+1)}\cong \tau_{\Lambda}$.
These representations are irreducible and every irreducible representation can be constructed in this way, thus
\begin{equation}\label{eq2:hatO(2m+1)}
\widehat{\Ot(2m+1)} = \left\{\tau_{\Lambda,\delta}: \Lambda\in\mathcal P(\SO(2m+1)),\; \delta\in\{\pm1\}\right\}.
\end{equation}

The even case is more complicated (see \cite[Subsection 2.2]{LMR} for more details).
Set $\overline\Lambda = \sum_{i=1}^{m-1} a_i\,\varepsilon_i-a_m\varepsilon_m$ if $\Lambda=\sum_{i=1}^m a_i\,\varepsilon_i\in\mathcal P(\SO(2m))$.
For $\Lambda\in \mathcal P(\SO(2m))$ satisfying $\Lambda=\overline\Lambda$ (i.e.\ $a_m=0$) and $\delta\in\{\pm1\}$, one associates $\tau_{\Lambda,\delta}\in\widehat{\Ot(2m)}$ on the vector space $V_\Lambda$.
Again we have that $\tau_{\Lambda,\delta}|_{\SO(2m)}\cong \tau_{\Lambda}$.
The parameter $\delta$ depends on certain intertwining operator $T_\Lambda$ (see \cite[p.~372]{Pe1} and \cite[eq.~(2.7)]{LMR}).
In the case $\Lambda\neq\overline\Lambda$ (i.e.\ $a_m\neq0$), there is a single representation $\tau_{\Lambda,0}\in\widehat{\Ot(2m)}$ defined on the vector space $V_{\Lambda}\oplus V_{\overline{\Lambda}}$, which satisfies $\pi_{\Lambda,0}|_{\SO(2m)}\cong \pi_{\Lambda}\oplus\pi_{\overline\Lambda}$.
Hence
\begin{align}\label{eq2:hatO(2m)}
\widehat{\Ot(2m)}
    & = \left\{\tau_{\Lambda,\delta}: \Lambda=\textstyle\sum\limits_{i=1}^m a_i\,\varepsilon_i \in\mathcal P(\SO(2m)),\; a_m=0,\, \delta\in\{\pm1\}\right\}. \\
    &\qquad\bigcup\quad \left\{\tau_{\Lambda,0}: \Lambda=\textstyle\sum\limits_{i=1}^m a_i\,\varepsilon_i \in\mathcal P(\SO(2m)),\; a_m>0\right\}. \notag
\end{align}
We shall use the notation $\tau_{\Lambda,\delta}$ in both cases, with the understanding that either $\delta=\pm1$ or $\delta=0$ according to $a_m=0$ or $a_m\neq0$ respectively.

One check that
\begin{equation*}
\tau_{\Lambda,\delta} \simeq
    \begin{cases}
      \tau_{\Lambda,-\delta}\otimes\det , \\
      \tau_{\Lambda,\delta}^*,
    \end{cases}
    \qquad\text{for any $\tau_{\Lambda,\delta}\in\widehat{\Ot(n)}$.}
\end{equation*}

\smallskip

We conclude this subsection by stating the branching laws from $\Ot(n)$ to $\Ot(n-1)$.

\begin{theorem}\label{thm2:branching-even}
Let $\tau_{\Lambda,\delta}\in\widehat{\Ot(2m)}$ with $\Lambda=\sum_{i=1}^m a_i\,\varepsilon_i$ and $\delta\in\{0,\pm1\}$.
If $a_m>0$ (resp.\ $a_m=0$), then
$
\tau_{\Lambda,\delta}|_{\Ot(2m-1)} = \sum \;\sigma_{\mu,\kappa},
$
where the sum is over all $\mu=\sum_{i=1}^{m-1} b_i\varepsilon_i$ such that
\begin{align*}
& a_1\geq b_1\geq a_2\geq b_2\geq\dots \geq a_{m-1}\geq b_{m-1}\geq a_m
\end{align*}
and any $\kappa\in\{\pm1\}$ (resp.\ a single value of $\kappa\in\{\pm1\}$).
\end{theorem}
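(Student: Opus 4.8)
The plan is to reduce the statement to the classical branching rule for $\SO(2m)\downarrow\SO(2m-1)$ and then run a multiplicity count, using that $\Ot(2m-1)$ meets both components of $\Ot(2m)$ while $\Ot(2m-1)\cap\SO(2m)=\SO(2m-1)$. First I would recall that, for $\nu=\sum_{i=1}^{m}c_i\varepsilon_i\in\mathcal P(\SO(2m))$, the classical rule gives $\tau_\nu|_{\SO(2m-1)}=\bigoplus_\mu\tau_\mu$, where $\mu=\sum_{i=1}^{m-1}b_i\varepsilon_i$ runs over all elements of $\mathcal P(\SO(2m-1))$ with $c_1\geq b_1\geq c_2\geq\dots\geq c_{m-1}\geq b_{m-1}\geq|c_m|$; in particular the restriction is multiplicity free. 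Applying this to $\nu=\Lambda$ and, when $a_m>0$, also to $\nu=\overline\Lambda=\sum_{i<m}a_i\varepsilon_i-a_m\varepsilon_m$ --- for which the interlacing conditions are identical, since $|{-a_m}|=a_m$ --- and recalling that $\tau_{\Lambda,\delta}|_{\SO(2m)}$ equals $\tau_\Lambda$ if $a_m=0$ and $\tau_\Lambda\oplus\tau_{\overline\Lambda}$ if $a_m>0$, one gets
\[
\tau_{\Lambda,\delta}\big|_{\SO(2m-1)}=
\begin{cases}
\bigoplus_\mu \tau_\mu & \text{if } a_m=0,\\[1mm]
\bigoplus_\mu 2\,\tau_\mu & \text{if } a_m>0,
\end{cases}
\]
where $\mu$ ranges over exactly the set in the statement (note $b_{m-1}\geq a_m\geq 0$, so these $\mu$ are $\SO(2m-1)$-dominant).

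Next I would pass to $\Ot(2m-1)$. Since $2m-1$ is odd, \eqref{eq2:hatO(2m+1)} (with $m$ replaced by $m-1$) tells us that every irreducible representation of $\Ot(2m-1)$ has the form $\sigma_{\mu,\kappa}$ with $\kappa\in\{\pm1\}$, and $\sigma_{\mu,\kappa}|_{\SO(2m-1)}\cong\tau_\mu$. As $\Ot(2m-1)$ is compact, $\tau_{\Lambda,\delta}|_{\Ot(2m-1)}$ is a finite direct sum of such $\sigma_{\nu,\kappa}$; restricting once more to $\SO(2m-1)$ and comparing with the displayed formula forces $\nu$ to range over the same set of $\mu$'s and forces, for each such $\mu$, the sum of the multiplicities of $\sigma_{\mu,+1}$ and $\sigma_{\mu,-1}$ in $\tau_{\Lambda,\delta}|_{\Ot(2m-1)}$ to be $1$ when $a_m=0$ and $2$ when $a_m>0$. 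This already settles the case $a_m=0$: a single value of $\kappa$ occurs for each admissible $\mu$, with multiplicity one.

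It remains, for $a_m>0$, to exclude the possibility that some $\sigma_{\mu,\kappa}$ appears twice. Here I would use the relation $\tau_{\Lambda,\delta}\cong\tau_{\Lambda,-\delta}\otimes\det$ recalled above: when $a_m>0$ one has $\delta=0=-\delta$, so $\tau_{\Lambda,0}\otimes\det\cong\tau_{\Lambda,0}$, whence $\tau_{\Lambda,0}|_{\Ot(2m-1)}$ is isomorphic to its own tensor product with $\det$. Since $\sigma_{\mu,+1}\otimes\det\cong\sigma_{\mu,-1}$, the multiplicities of $\sigma_{\mu,+1}$ and $\sigma_{\mu,-1}$ must agree; being equal and summing to $2$, each equals $1$, so both signs occur. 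Almost everything above is routine bookkeeping built on the classical $\SO$-branching; the only point that genuinely needs an idea is this last one --- telling apart ``both signs, once each'' from ``one sign, twice'' --- and the $\det$-twist symmetry disposes of it at once. (Identifying which sign $\kappa$ occurs when $a_m=0$ is a finer matter, requiring an analysis of the intertwiner $T_\Lambda$ in the spirit of \cite{Pe1}, but the statement does not ask for it.)
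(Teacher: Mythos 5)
Your argument is correct. Note that the paper does not prove Theorem~\ref{thm2:branching-even} at all: it is quoted as a known branching law (with background in \cite[Subsection 2.2]{LMR} and \cite{Pe1}), so you are supplying a proof where the paper only cites the literature; what you write is essentially the standard derivation. The reduction to the classical $\SO(2m)\downarrow\SO(2m-1)$ rule, the identification $\widehat{\Ot(2m-1)}=\{\sigma_{\mu,\pm1}\}$ with $\sigma_{\mu,\kappa}|_{\SO(2m-1)}\cong\tau_\mu$, and the multiplicity count are all sound, and you correctly isolate the one non-routine point: ruling out ``one sign with multiplicity two'' when $a_m>0$, which your $\det$-twist symmetry $\tau_{\Lambda,0}\otimes\det\cong\tau_{\Lambda,0}$, $\sigma_{\mu,\kappa}\otimes\det\cong\sigma_{\mu,-\kappa}$ handles cleanly. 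One small point worth making explicit: this step uses that $\det_{\Ot(2m)}$ restricts to $\det_{\Ot(2m-1)}$, i.e.\ that $\Ot(2m-1)$ is embedded in the standard way $B\mapsto\left(\begin{smallmatrix}B&\\&1\end{smallmatrix}\right)$ (meeting both components of $\Ot(2m)$); this is the embedding the theorem intends, and your parenthetical remark that the statement does not require identifying the single $\kappa$ when $a_m=0$ is also accurate.
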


\begin{theorem}\label{thm2:branching-odd}
Let $\tau_{\Lambda,\delta}\in\widehat{\Ot(2m+1)}$, where $\Lambda=\sum_{i=1}^m a_i\,\varepsilon_i$ and $\delta\in\{\pm1\}$.
Then
$
\tau_{\Lambda,\delta}|_{\Ot(2m)} = \sum \;\sigma_{\mu,\kappa},
$
where the sum is over all $\mu=\sum_{i=1}^{m} b_i\varepsilon_i$ such that
\begin{align*}
& a_1\geq b_1\geq a_2\geq b_2\geq\dots \geq a_{m-1}\geq b_{m-1}\geq a_m\geq b_m\geq0
\end{align*}
and, a single value of $\kappa\in\{\pm1\}$ if $b_m=0$ or $\kappa=0$ if $b_m>0$.
\end{theorem}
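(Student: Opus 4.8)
The plan is to deduce the statement from the classical branching rule for the special orthogonal groups, using the relation between $\widehat{\Ot(n)}$ and $\widehat{\SO(n)}$ recalled above. Realize $\Ot(2m)$ inside $\Ot(2m+1)$ as the stabilizer of the last coordinate vector, so that $\SO(2m)=\Ot(2m)\cap\SO(2m+1)$ and the Cartan subalgebra fixed for $\so(2m,\C)$ coincides with the one fixed for $\so(2m+1,\C)$. First restrict $\tau_{\Lambda,\delta}$ all the way down to $\SO(2m)$: since $\tau_{\Lambda,\delta}|_{\SO(2m+1)}\cong\tau_\Lambda$, the classical multiplicity-free branching rule for $\SO(2m+1)\downarrow\SO(2m)$ gives
\[
\tau_{\Lambda,\delta}\big|_{\SO(2m)}\ \cong\ \bigoplus_{\mu}\tau_\mu\;\oplus\;\bigoplus_{\mu:\,b_m>0}\tau_{\overline\mu},
\]
where in both sums $\mu=\sum_{i=1}^m b_i\varepsilon_i\in\mathcal P(\SO(2m))$ runs over the parameters with $b_m\ge 0$ satisfying the interlacing inequalities in the statement, each summand occurring once.

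Next I would lift this to $\Ot(2m)$ by comparing $\SO(2m)$-content. Write $\tau_{\Lambda,\delta}|_{\Ot(2m)}\cong\bigoplus\sigma_{\nu,\kappa}$ and restrict back to $\SO(2m)$. Recall from \eqref{eq2:hatO(2m)} that $\sigma_{\nu,0}|_{\SO(2m)}\cong\tau_\nu\oplus\tau_{\overline\nu}$ when $\nu$ has nonzero last coordinate, that $\sigma_{\nu,+1}|_{\SO(2m)}\cong\sigma_{\nu,-1}|_{\SO(2m)}\cong\tau_\nu$ when it vanishes, and that $\tau_\nu$ occurs in the restriction of no other irreducible representation of $\Ot(2m)$. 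Matching with the display and using additivity of multiplicities, one concludes: for each $\mu$ in the interlacing set with $b_m>0$, the copy of $\tau_\mu\oplus\tau_{\overline\mu}$ forces exactly one copy of $\sigma_{\mu,0}$; for each $\mu$ in the interlacing set with $b_m=0$, the single copy of $\tau_\mu$ forces exactly one of $\sigma_{\mu,+1}$, $\sigma_{\mu,-1}$ to occur, with multiplicity one; and no $\sigma_{\nu,\kappa}$ with $\nu$ outside the interlacing set can occur. This is exactly the assertion of the theorem, the uniqueness of the sign $\kappa$ in the $b_m=0$ case being precisely what multiplicity-freeness upstairs provides.

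What this argument does \emph{not} settle is which of the two values $\kappa\in\{\pm1\}$ occurs for a given $\mu$ with $b_m=0$; the theorem only asserts that one of them does, and I expect that pinning down the actual value would be the real obstacle. It can be approached as follows. The value of $\kappa$ is detected by the scalar through which any $h\in\Ot(2m)\setminus\SO(2m)$ acts on the one-dimensional space $\Hom_{\SO(2m)}(V_\mu,V_\Lambda)$, measured against the intertwiner $T_\mu$ used to define the labelling $\sigma_{\mu,\pm1}$. Choosing $h=\mathrm{diag}(\mathrm{Id}_{2m-1},-1,1)$ and factoring $h=(-\mathrm{Id}_{2m+1})\,w$ with $w=\mathrm{diag}(-\mathrm{Id}_{2m-1},1,-1)\in\SO(2m+1)$, the definition of $\tau_{\Lambda,\delta}$ gives $\tau_{\Lambda,\delta}(-\mathrm{Id}_{2m+1})=\delta\,\mathrm{Id}$, hence $\tau_{\Lambda,\delta}(h)=\delta\,\tau_\Lambda(w)$; so $\kappa$ is $\delta$ times the sign by which the involution $\tau_\Lambda(w)$ acts on that line. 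Evaluating this sign, e.g.\ in a Gel'fand--Tsetlin basis of $V_\Lambda$ or by a weight computation (noting that $w$ is a product of $\pi$-rotations), is the delicate, bookkeeping-heavy step; it is, however, not needed for the statement as formulated, nor for the multiplicity computations in which the theorem is used.
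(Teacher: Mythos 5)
Your argument is correct. Note first that the paper itself offers no proof of this statement: Theorems \ref{thm2:branching-even} and \ref{thm2:branching-odd} are quoted as known branching laws (with the background on $\widehat{\Ot(n)}$ referred to \cite[Subsection 2.2]{LMR} and \cite{Pe1}), so there is no internal proof to compare against. Your derivation is the standard one and it is sound: since $\tau_{\Lambda,\delta}|_{\SO(2m+1)}\cong\tau_\Lambda$, the classical multiplicity-free interlacing rule for $\SO(2m+1)\downarrow\SO(2m)$ determines the $\SO(2m)$-content, and because every irreducible of $\Ot(2m)$ has a known restriction to $\SO(2m)$ ($\sigma_{\nu,0}\mapsto\tau_\nu\oplus\tau_{\overline\nu}$ for $\nu$ with nonzero last coordinate, $\sigma_{\nu,\pm1}\mapsto\tau_\nu$ otherwise), matching multiplicities forces exactly one $\sigma_{\mu,0}$ for each interlacing $\mu$ with $b_m>0$, exactly one of $\sigma_{\mu,\pm1}$ for each interlacing $\mu$ with $b_m=0$ (multiplicity-freeness upstairs is precisely what rules out both signs occurring), and nothing else. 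You are also right that identifying \emph{which} sign $\kappa$ occurs is a separate computation (your reduction via $h=g_0$ of $\Ot(2m)$ written as $-\mathrm{Id}_{2m+1}$ times an element of $\SO(2m+1)$ is the correct way to set it up), but the theorem as stated, and its use in Lemmas \ref{lem3:lemma} and \ref{lem3:lemma2}, only needs the existence of a single value of $\kappa$, so your proof covers exactly what is asserted.
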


Note that in both cases the branching is multiplicity free, that is, the multiplicity $[\sigma_{\mu,\kappa}:\tau_{\Lambda,\delta}]:=\dim_{\Ot(n-1)} (W_\sigma,V_\tau)$ is always equal to $0$ or $1$.

\subsection{Unitary dual of $\Iso(\R^n)$}
Here we describe the unitary irreducible representations of $\Ot(n)\ltimes\R^n\simeq\Iso(\R^n)$ (see \cite[\S4]{LMR} for more details).
We write any element $g\in \Ot(n)\ltimes\R^n$ as $g=B L_b$, where $B\in \Ot(n)$ is called the rotational part, and $L_b$ denotes translation by $b\in\R^n$.
From now on, we fix the following notation:
\begin{align}
G &= \Ot(n)\ltimes \R^n, \notag\\
K &= \Ot(n), \label{eq2:G-K-M}\\
M &= \left\{\left(\begin{smallmatrix}B&\\&\det(B)\end{smallmatrix}\right): B\in \Ot(n-1)\right\}.\notag
\end{align}

An element $(\tau,W_\tau)\in\widehat K$ induces a representation $\widetilde\tau$ of $G$ on $W_\tau$ given by
$$
\widetilde\tau(BL_b) (w) = \tau(B) (w).
$$
In other words, $\widetilde\tau=\tau\otimes\mathrm{Id_{W_\tau}}$.
Clearly, $\widetilde\tau$ is finite dimensional, unitary and irreducible.

We identify $\widehat{\R}^n$ with $\R^n$ via the correspondence $\alpha \mapsto \xi_\alpha(\,\cdot\,) = e^{2\pi i \langle \alpha,\, \cdot\, \rangle}$ for $\alpha \in \R^n$.
Under the notation given by \eqref{eq2:G-K-M}, given $r>0$ and $(\sigma,V_\sigma)\in\widehat M$, we consider the induced representation of $G$ given by
\begin{equation*}
(\pi_{\sigma,r}, H_{\sigma,r}):=\operatorname{Ind}_{M\ltimes\R^n}^{K\ltimes\R^n} (\sigma\otimes \xi_{r e_n}).
\end{equation*}
It is well-known that $\pi_{\sigma,r}$ is unitary and irreducible.

Finally, a full set of representatives of $\widehat G$ is given by
\begin{eqnarray}\label{eq2:dual_E(n)}
\widehat G =
    \left\{\widetilde\tau:\tau\in\widehat{K}\right\} \;\cup\;
    \left\{\pi_{\sigma,r}:\sigma\in\widehat{M},\; r>0\right\}.
\end{eqnarray}

\section{Main Theorem}\label{sec:proof}
In this section we prove Theorem~\ref{thm1:main}.
We still use the notation in \eqref{eq2:G-K-M} for the groups $G$, $K$ and $M$, and $\{e_1,\dots,e_n\}$ denotes the canonical basis of $\R^n$.

We recall some notions on homogeneous vector bundles (see \cite[\S5.2]{Wa} or \cite[Subsection 2.1]{LMR}) on $\R^n$ and compact flat manifolds.
Let $(\tau,V_\tau)$ be a unitary representation of $K$ of finite dimension.
The homogeneous vector bundle $E_\tau= G \times_\tau V_\tau$ of $X$ is constructed as $G\times V_\tau/\sim$ where $(x,v)\sim (xk,\tau (k^{-1})v)$ for every $k\in K$.
The group $G$ acts on $E_\tau$ by $g\cdot [x,v]=[gx,v]$, where $[x,v]$ denotes the class of equivalence of $(x,v)$.
The space of smooth sections $\Gamma^\infty(E_\tau)$ of $E_\tau$ is in correspondence with the set $C^\infty(G;\tau)$, the smooth functions $f:G\to V_\tau$ such that $f(xk)=\tau(k^{-1})f(x)$ for every $k\in K$ and $x\in G$.
The element
$$
C:=e_1^2+\dots+e_n^2\in U(\mathfrak g)
$$
defines a differential operator $\Delta_\tau$ on $\Gamma^\infty(E_\tau)$.
For example, if $\tau=\mathbf{1}$, the trivial representation of $K$, then
$$
\Delta_{\mathbf{1}} = \frac{\partial^2}{\partial x_1^2} + \dots + \frac{\partial^2}{\partial x_n^2},
$$
is just the Laplace operator on $\R^n$.
Furthermore, the element $C$ commutes with every irreducible representation of $G$ contained in $C^\infty(G;\tau)$, thus by Schur's lemma,  $C$ acts by an scalar $\lambda(\pi,C)$ on $V_\pi$ for every $\pi\in\widehat G$ such that $V_\pi\subset C^\infty(G;\tau)$.

The quotient $\Gamma\ba E_\tau$ is a homogeneous vector bundle over the compact flat manifold  $\Gamma\ba \R^n$, and again, the element $C$ defines a differential operator $\Delta_{\tau,\Gamma}$ acting on the sections of $\Gamma\ba E_\tau$.
Given $\Gamma_1$ and $\Gamma_2$, two Bieberbach groups in $G$, the spaces $\Gamma_1\backslash \R^n$ and $\Gamma_2\backslash \R^n$ are said to be \emph{$\tau$-isospectral} if $\Delta_{\tau,\Gamma_1}$ and $\Delta_{\tau,\Gamma_2}$ have the same spectrum.

We now determine the $\tau$-spectrum for any $\tau\in\widehat K$ as in \cite{LMR}.
Recall that $n_\Gamma(\pi)$ ($\pi\in\widehat G$) denotes the multiplicity of $\pi$ in $L^2(\Gamma\ba G)$ as we introduce in \eqref{eq1:L^2}.
We shall use the following notation:
\begin{align*}
&\left\{\begin{array}{r@{\;}l}
\widehat G(0)&=\{\widetilde\tau:\tau\in\widehat K\},\\[1mm]
\widehat G(\sigma)&= \{\pi_{\sigma,r}:r>0\}\quad (\sigma\in\widehat M),
\end{array}\right.
&\text{thus}\qquad
\widehat G=\widehat G(0)\cup\bigcup_{\sigma\in\widehat M} \widehat G(\sigma).
\end{align*}

\begin{theorem}
Let $\tau\in\widehat K$ and $\lambda\in\R$.
The multiplicity $d_\lambda(\tau,\Gamma)$ of $\lambda$ in the spectrum of $\Delta_{\tau,\Gamma}$ is given by
\begin{equation}\label{eq3:tau-spec}
d_\lambda(\tau,\Gamma) =
\begin{cases}
0&\quad\text{if }\,\lambda<0,\\[3mm]
n_\Gamma(\widetilde\tau)  &\quad\text{if }\,\lambda=0,\\[3mm]
\displaystyle\sum_{\sigma\in\widehat M:\,[\sigma:\tau|_M]>0} n_\Gamma(\pi_{\sigma,\sqrt\lambda/2\pi})
    &\quad\text{if }\,\lambda>0.
\end{cases}
\end{equation}
\end{theorem}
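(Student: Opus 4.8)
The plan is to reduce the spectrum of $\Delta_{\tau,\Gamma}$ to the multiplicities in \eqref{eq1:L^2}, through the standard dictionary between $\tau$-sections and $\tau$-equivariant functions. Since the sections of $\Gamma\ba E_\tau$ correspond to the functions $f:\Gamma\ba G\to V_\tau$ with $f(xk)=\tau(k)^{-1}f(x)$, we obtain an isomorphism $L^2(\Gamma\ba G;\tau)\cong(L^2(\Gamma\ba G)\otimes V_\tau)^K$ of Hilbert spaces, the $K$-invariants being taken for the action $R_k\otimes\tau(k)$, where $R$ denotes right translation. The element $C=e_1^2+\dots+e_n^2$ is $\Ot(n)$-invariant, so $R_C\otimes 1$ preserves this space of invariants and $\Delta_{\tau,\Gamma}$ is well defined on it; being central in $U(\mathfrak g)$, $C$ moreover gives an operator $R_C$ on $L^2(\Gamma\ba G)$ commuting with $R$, which therefore preserves the decomposition \eqref{eq1:L^2} and, by Schur's lemma, acts on each $H_\pi$ by the scalar $\lambda(\pi,C)$. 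Hence
\begin{equation*}
L^2(\Gamma\ba G;\tau)\;\cong\;\sum_{\pi\in\widehat G}n_\Gamma(\pi)\,(H_\pi\otimes V_\tau)^K ,
\end{equation*}
each summand lying in one eigenspace of $\Delta_{\tau,\Gamma}$, with $\dim(H_\pi\otimes V_\tau)^K=[\tau:\pi|_K]$ (using that the irreducible representations of $\Ot(n)$ are self-dual, cf.\ \S\ref{subsec:dual-ortho}). As $\Delta_{\tau,\Gamma}$ is elliptic on the compact manifold $\Gamma\ba\R^n$, its spectrum is discrete, and we conclude
\begin{equation*}
d_\lambda(\tau,\Gamma)=\sum_{\pi}n_\Gamma(\pi)\,[\tau:\pi|_K],
\end{equation*}
the sum running over those $\pi\in\widehat G$ on whose summand $H_\pi$ the operator $\Delta_{\tau,\Gamma}$ acts by $\lambda$.

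It remains to evaluate, on the two families in \eqref{eq2:dual_E(n)}, the contributed eigenvalue and the branching multiplicity $[\tau:\pi|_K]$. If $\pi=\widetilde{\tau'}$ with $\tau'\in\widehat K$, the translations act trivially, so $R_C$ — hence $\Delta_{\tau,\Gamma}$ — contributes the eigenvalue $0$, and $\widetilde{\tau'}|_K=\tau'$, so $[\tau:\widetilde{\tau'}|_K]$ is $1$ if $\tau'=\tau$ and $0$ otherwise. If $\pi=\pi_{\sigma,r}=\operatorname{Ind}_{M\ltimes\R^n}^{K\ltimes\R^n}(\sigma\otimes\xi_{re_n})$, then $\pi_{\sigma,r}|_{\R^n}$ is supported on the characters $\xi_\alpha$ with $|\alpha|=r$ (the $\Ot(n)$-orbit of $re_n$), on which $e_j$ acts as multiplication by $2\pi i\alpha_j$; hence $C$ acts by $-4\pi^2r^2$, which, with the sign normalization making the Laplacian non-negative, means that $\Delta_{\tau,\Gamma}$ contributes the eigenvalue $4\pi^2r^2$. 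Moreover $\pi_{\sigma,r}|_K\cong\operatorname{Ind}_M^K\sigma$ (restrict the defining functions to $K$, using $K\cap(M\ltimes\R^n)=M$ and $(M\ltimes\R^n)\ba(K\ltimes\R^n)\cong M\ba K$), so by Frobenius reciprocity $[\tau:\pi_{\sigma,r}|_K]=[\sigma:\tau|_M]$, independently of $r$; by the remark following Theorem~\ref{thm2:branching-odd} (and $M\cong\Ot(n-1)$) this is $0$ or $1$.

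Assembling these facts yields \eqref{eq3:tau-spec}. If $\lambda<0$, no $\pi\in\widehat G$ contributes the eigenvalue $\lambda$, so $d_\lambda(\tau,\Gamma)=0$. If $\lambda=0$, only the $\widetilde{\tau'}$ contribute, and among them only $\tau'=\tau$ has nonzero multiplicity, so $d_0(\tau,\Gamma)=n_\Gamma(\widetilde\tau)$. If $\lambda>0$, the representations contributing the eigenvalue $\lambda$ are precisely the $\pi_{\sigma,\sqrt\lambda/2\pi}$ with $\sigma\in\widehat M$ — one for each $\sigma$, since $4\pi^2r^2=\lambda$ forces $r=\sqrt\lambda/2\pi$ — each with multiplicity $[\sigma:\tau|_M]\in\{0,1\}$, so $d_\lambda(\tau,\Gamma)=\sum_{\sigma:\,[\sigma:\tau|_M]>0}n_\Gamma(\pi_{\sigma,\sqrt\lambda/2\pi})$. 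In all three cases the sum is finite, since each $n_\Gamma(\pi)$ is finite and, for fixed $\lambda$, only finitely many $\pi$ with the right eigenvalue satisfy $[\tau:\pi|_K]>0$ (at most the single $\widetilde\tau$, or else the finitely many $\sigma$ occurring in $\tau|_M$).

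I expect the main difficulty to lie in the functional-analytic bookkeeping rather than in the representation theory: one must justify that $L^2(\Gamma\ba G;\tau)$ genuinely decomposes as the \emph{discrete} Hilbert sum displayed above (this rests on the discreteness of \eqref{eq1:L^2} for the cocompact $\Gamma$ together with the finiteness of the $n_\Gamma(\pi)$), that $\Delta_{\tau,\Gamma}$ is essentially self-adjoint and diagonalized by that sum, and one must keep track of the $\tau$ versus $\tau^*$ convention in the identification $(H_\pi\otimes V_\tau)^K\cong\Hom_K(V_\tau^*,H_\pi)$ (harmless here by self-duality of $\Ot(n)$-representations) and of the sign normalization of $\Delta_\tau$. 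The representation-theoretic inputs — the restrictions $\widetilde{\tau'}|_K=\tau'$ and $\pi_{\sigma,r}|_K\cong\operatorname{Ind}_M^K\sigma$, Frobenius reciprocity, multiplicity-freeness of the branching from $\Ot(n)$ to $\Ot(n-1)$, and the Casimir eigenvalues — are then short computations.
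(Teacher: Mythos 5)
Your proof is correct and takes essentially the same approach as the paper's: where the paper quotes \cite[Prop.~2.4]{LMR} for the identity $d_\lambda(\tau,\Gamma)=\sum_{\pi:\,\lambda(C,\pi)=\lambda}n_\Gamma(\pi)\,[\tau^*:\pi|_K]$ and \cite[Lem.~4.2]{LMR} for the Casimir scalars, you rederive these two inputs directly (via $L^2(\Gamma\ba G;\tau)\cong(L^2(\Gamma\ba G)\otimes V_\tau)^K$, Schur's lemma, and the computation of $C$ on $\pi_{\sigma,r}|_{\R^n}$), and then conclude exactly as the paper does, using $\widetilde\tau|_K=\tau$, Frobenius reciprocity $[\tau:\pi_{\sigma,r}|_K]=[\sigma:\tau|_M]$, and the multiplicity-free branching from $\Ot(n)$ to $\Ot(n-1)$. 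Your explicit handling of the sign normalization (eigenvalue $4\pi^2r^2$ of $\Delta_{\tau,\Gamma}$ versus $\lambda(C,\pi_{\sigma,r})=-4\pi^2r^2$) is, if anything, slightly more careful than the paper's wording.
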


\begin{proof}
For any locally symmetric space we have that (see \cite[Prop.~2.4]{LMR})
\begin{equation}\label{eq3:d_lambda1}
d_{\lambda}(\tau,\Gamma)=
    \sum_{\pi\in \widehat G\atop \lambda(C,\pi)=\lambda}
    n_\Gamma(\pi)\, [\tau^*:\pi|_K].
\end{equation}
Note that the sum is already over the elements in $\widehat G_{\tau^*}=\widehat G_{\tau}:=\{\pi\in\widehat G:\, [\tau:\pi|_K]>0\}$, since $\tau^*\simeq\tau$.
Since $\widetilde \tau_0|_K=\tau_0$ for any $\tau_0\in\widehat K$, it follows that $\widehat G_{\tau}\cap\widehat G(0) = \{\widetilde{\tau}\}$.
On the other hand, $[\tau:\pi_{\sigma,r}|_K] = [\tau: \operatorname{Ind}_{M}^K(\sigma)]= [\sigma:\tau|_{M}]$ by Frobenius reciprocity.
Then
\begin{equation*}
\widehat G_{\tau} =
    \left\{\widetilde \tau\right\} \cup
    \bigcup_{\sigma\in\widehat M \atop [\sigma:\tau|_M]>0} \widehat G(\sigma).
\end{equation*}
The branching rules given in Theorems~\ref{thm2:branching-even} and \ref{thm2:branching-odd} give a complete description of $\widehat G_\tau$ in terms of highest weights.
Moreover, they also ensure that $[\tau:\pi|_K]=1$ for every $\pi\in\widehat G_{\tau}$.

Finally, by Schur's lemma, the element $C$ acts by a scalar $\lambda(C,\pi)$ on each $H_\pi$.
We have (see \cite[Lem.~4.2]{LMR})
\begin{equation}\label{eq3:Casimir_on_pi}
\lambda(C,\pi) =
\begin{cases}
  0&    \quad\text{for }\pi\in\widehat G(0),\\
  -4\pi^2 r^2 & \quad\text{for }\pi=\pi_{\sigma,r}\in\widehat G(\sigma),
\end{cases}
\end{equation}
which concludes the proof.
\end{proof}

We are now in a position to prove the main theorem.

\begin{proof}[Proof of Theorem~\ref{thm1:main}]\label{proof:main-thm}
We have to prove that
\begin{equation}\label{eq3:star}
n_{\Gamma_1}(\pi)=n_{\Gamma_2}(\pi)
\end{equation}
for every $\pi\in\widehat G$, by assuming that $d_\lambda(\tau,\Gamma_1)=d_\lambda(\tau,\Gamma_2)$ for every $\lambda\in\R$ and every $\tau\in\widehat K$.
From \eqref{eq3:tau-spec} for the eigenvalue $\lambda=0$, it follows that \eqref{eq3:star} holds for every $\pi\in\widehat G(0)$.

It remains to prove that, for any $\sigma\in\widehat M$, \eqref{eq3:star} holds for every $\pi\in\widehat G(\sigma)$.
We shall do this by the repeated application of the following lemmas.
We write $n=2m$ if $n$ is even and $n=2m+1$ if $n$ is odd.
For $\mu_1=\sum_{i=1}^m b_i\varepsilon_i$ and $\mu_2=\sum_{i=1}^m c_i\varepsilon_i$ in $\mathcal P(\SO(n))$ with $b_m,c_m\geq0$, we write $\mu_1<\mu _2$ if $c_1-b_1\geq c_2-b_2\geq \dots\geq c_m-b_m\geq 0$, and set $\ell(\mu)=p$ if $b_p\neq0$ and $b_i=0$ for all $i>p$.

\begin{lemma}\label{lem3:lemma}
Let $\Gamma_1$ and $\Gamma_2$ be Bieberbach groups in $G$ and let $\mu_0\in\mathcal P(\SO(n-1))$.
If $\Gamma_1\ba\R^n$ and $\Gamma_2\ba\R^n$ are $\tau_{\mu_0,\delta}$-isospectral and $n_{\Gamma_1}(\pi)=n_{\Gamma_2}(\pi)$ for every $\pi\in\bigcup_{\mu<\mu_0}\widehat G(\sigma_{\mu,\kappa})$, then $n_{\Gamma_1}(\pi)=n_{\Gamma_2}(\pi)$ for every $\pi\in\widehat G(\sigma_{\mu_0,\kappa})$.
\end{lemma}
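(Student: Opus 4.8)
The plan is to feed the branching laws of Section~\ref{subsec:dual-ortho} into the eigenvalue-multiplicity formula \eqref{eq3:tau-spec} and then to peel off the contribution of $\sigma_{\mu_0,\kappa}$, using the inductive information on $\widehat G(\sigma_{\mu,\kappa})$ for $\mu<\mu_0$.

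The test representation is $\tau_{\mu_0,\delta}\in\widehat K=\widehat{\Ot(n)}$ attached to $\mu_0$: its highest weight is the evident extension of $\mu_0$ to a dominant weight of $\SO(n)$, chosen so that $\mu_0$ occurs as the ``top'' of the interlacing pattern in Theorem~\ref{thm2:branching-even} (if $n$ is even) or Theorem~\ref{thm2:branching-odd} (if $n$ is odd). For $\lambda>0$, \eqref{eq3:tau-spec} reads
\[
d_\lambda(\tau_{\mu_0,\delta},\Gamma)=\sum_{\sigma\in\widehat M:\,[\sigma:\tau_{\mu_0,\delta}|_M]>0} n_\Gamma\!\left(\pi_{\sigma,\sqrt\lambda/2\pi}\right),
\]
and by the branching theorems this restriction is multiplicity free, the summands being the $\sigma_{\mu,\kappa}$ with $\mu$ interlacing the highest weight of $\tau_{\mu_0,\delta}$ and $\kappa$ the prescribed value(s). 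The crux --- and the reason the order $<$ and the function $\ell$ are introduced --- is that these interlacing weights $\mu$ are exactly $\mu_0$ together with weights with $\mu<\mu_0$. Granting this and writing $r=\sqrt\lambda/2\pi$,
\[
d_\lambda(\tau_{\mu_0,\delta},\Gamma)=n_\Gamma(\pi_{\sigma_{\mu_0,\kappa},r})+\sum_{\mu<\mu_0}\;\sum_{\kappa'} n_\Gamma(\pi_{\sigma_{\mu,\kappa'},r}),
\]
the inner sum ranging over the $\kappa'$ dictated by the branching law.

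Now I would compare $\Gamma_1$ and $\Gamma_2$. Every term $n_\Gamma(\pi_{\sigma_{\mu,\kappa'},r})$ with $\mu<\mu_0$ lies in $\bigcup_{\mu<\mu_0}\widehat G(\sigma_{\mu,\kappa'})$, so by hypothesis the double sum agrees for $\Gamma=\Gamma_1$ and $\Gamma=\Gamma_2$; and $\tau_{\mu_0,\delta}$-isospectrality gives $d_\lambda(\tau_{\mu_0,\delta},\Gamma_1)=d_\lambda(\tau_{\mu_0,\delta},\Gamma_2)$. Subtracting yields $n_{\Gamma_1}(\pi_{\sigma_{\mu_0,\kappa},r})=n_{\Gamma_2}(\pi_{\sigma_{\mu_0,\kappa},r})$ for all $\lambda>0$, hence for all $r>0$, which is the claim for $\pi\in\widehat G(\sigma_{\mu_0,\kappa})$. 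When two values of $\kappa$ are admissible for $\mu_0$ I would rerun the argument with the companion of $\tau_{\mu_0,\delta}$ (the twist by $\det$, i.e.\ $\tau_{\mu_0,-\delta}$, or a slightly perturbed extension making both $\sigma_{\mu_0,\pm1}$ appear in the restriction) to obtain the remaining multiplicity.

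The main obstacle is the combinatorial claim highlighted above: that $\tau_{\mu_0,\delta}|_M$ contains only $\sigma_{\mu_0,\kappa}$ and the $\sigma_{\mu,\kappa'}$ with $\mu<\mu_0$. This has to be checked directly from the interlacing inequalities in Theorems~\ref{thm2:branching-even} and \ref{thm2:branching-odd} against the definition of $<$, separating the even and odd cases and tracking the $\{0,\pm1\}$-valued parameters $\kappa$ (this is where the bookkeeping with $g_0$, the twist by $\det$ and the intertwiner $T_\Lambda$ from Section~\ref{subsec:dual-ortho} is needed). Given that, the rest is a single substitution into \eqref{eq3:tau-spec}.
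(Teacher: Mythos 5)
Your proposal is correct and follows essentially the same route as the paper: take $\Lambda\in\mathcal P(\SO(n))$ with the same coefficients as $\mu_0$, feed the branching laws into \eqref{eq3:tau-spec} at $\lambda=4\pi^2r^2$, use the hypothesis for $\mu<\mu_0$ to cancel all terms except the $\mu_0$-term, and use the $\det$-twist $\tau_{\Lambda,\pm\delta}$ to capture both admissible values of $\kappa_0$. The combinatorial step you single out --- that every $\mu$ occurring in $\tau_{\Lambda,\delta}|_M$ is either $\mu_0$ or satisfies $\mu<\mu_0$ --- is exactly the point the paper's proof asserts without further elaboration.
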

\begin{proof}
Write $\mu_0=\sum_{i=1}^m b_i\varepsilon_i \in \mathcal P(\SO(n-1))$, with the convention that $b_m=0$ if $n$ is even, since $n-1=2m-1$.
Let $\Lambda=\sum_{i=1}^m b_i\varepsilon_i\in\mathcal P(\SO(n))$.
Theorems~\ref{thm2:branching-even} and \ref{thm2:branching-odd} ensure that $[\sigma_{\mu,\kappa}:\tau_{\Lambda,\delta}|_M]>0$ if and only if $\mu=\sum_{i=1}^m c_i\varepsilon_i$ satisfies
$$
b_1\geq c_1\geq b_2\geq c_2\geq \dots \geq b_m\geq c_m\geq 0,
$$
and for a single value of $\kappa\in\{0,\pm1\}$.
Now, by \eqref{eq3:tau-spec} we have that
$$
d_{4\pi^2r^2}(\tau_{\Lambda,\pm\delta},\Gamma_i) = n_{\Gamma_i}(\pi_{\sigma_{\mu_0,\pm\kappa_0},r}) + \sum_{[\sigma_{\mu,\kappa}:\tau_{\Lambda,\delta}]>0\atop \mu\neq\mu_0} n_{\Gamma_i}(\pi_{\sigma_{\mu,\pm\kappa_\mu},r})
$$
for every $r>0$.
It is clear that, if $[\sigma_{\mu,\kappa}:\tau_{\Lambda,\delta}|_M]>0$ then $\mu=\mu_0$ or $\mu<\mu_0$.
Hence $n_{\Gamma_1}(\pi_{\sigma_{\mu_0,\pm\kappa_0},r}) = n_{\Gamma_2}(\pi_{\sigma_{\mu_0,\pm\kappa_0},r})$ for every $r>0$ since we are assuming that $d_{4\pi^2r^2}(\tau_{\Lambda,\pm\delta},\Gamma_1)= d_{4\pi^2r^2}(\tau_{\Lambda,\pm\delta},\Gamma_2)$ ($\tau_{\mu_0,\kappa}$-isospectrality) and $n_{\Gamma_1}(\pi_{\sigma_{\mu,\pm\kappa_\mu},r}) = n_{\Gamma_2}(\pi_{\sigma_{\mu,\pm\kappa_\mu},r})$ for every $\mu<\mu_0$ and every $r>0$.
\end{proof}

For example, by applying Lemma~\ref{lem3:lemma} to $\mu_0=0$, we obtain that \eqref{eq3:star} holds for every $\pi\in\widehat G(\sigma_{0,\kappa})$, which is the same result of \cite[Prop.~3.2~(c)]{Pe2}.

\begin{lemma}\label{lem3:lemma2}
Let $\Gamma_1$ and $\Gamma_2$ be Bieberbach groups in $G$ and let $1\leq p<m$.
If $\Gamma_1\ba\R^n$ and $\Gamma_2\ba\R^n$ are $\tau_{\mu,\delta}$-isospectral for every $\mu\in\mathcal P(\SO(n))$ such that $\ell(\mu)=p+1$ and $n_{\Gamma_1}(\pi)=n_{\Gamma_2}(\pi)$ for every $\pi\in\widehat G(\sigma_{\mu,\kappa})$ such that $\ell(\mu)\leq p$, then $n_{\Gamma_1}(\pi)=n_{\Gamma_2}(\pi)$ for every $\pi\in\widehat G(\sigma_{\mu,\kappa})$ such that $\ell(\mu)=p+1$.
\end{lemma}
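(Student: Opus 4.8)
The plan is to mimic the proof of Lemma~\ref{lem3:lemma}, but this time inducting on the parameter $\ell(\mu)$, the ``length'' of the highest weight, rather than on the ordering $\mu<\mu_0$. The point of the statement is that once we know equality of multiplicities on all $\widehat G(\sigma_{\mu,\kappa})$ with $\ell(\mu)\le p$, the $\tau$-isospectrality with respect to representations whose highest weight has length exactly $p+1$ lets us peel off the new contributions one weight at a time. Concretely, I would fix $p$ with $1\le p<m$ and a weight $\mu_0=\sum_{i=1}^{p+1} c_i\varepsilon_i\in\mathcal P(\SO(n-1))$ with $c_{p+1}>0$ (so $\ell(\mu_0)=p+1$), set $\Lambda=\sum_{i=1}^{p+1}c_i\varepsilon_i\in\mathcal P(\SO(n))$, and look at the $\tau_{\Lambda,\delta}$-spectrum.

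The key computation is exactly the one in Lemma~\ref{lem3:lemma}: by \eqref{eq3:tau-spec} and \eqref{eq3:Casimir_on_pi},
\[
d_{4\pi^2r^2}(\tau_{\Lambda,\pm\delta},\Gamma_i)=\sum_{\sigma_{\mu,\kappa}:\,[\sigma_{\mu,\kappa}:\tau_{\Lambda,\delta}|_M]>0} n_{\Gamma_i}(\pi_{\sigma_{\mu,\pm\kappa},r}),
\]
and by the branching rules (Theorems~\ref{thm2:branching-even} and \ref{thm2:branching-odd}) the weights $\mu=\sum_i d_i\varepsilon_i$ occurring in this sum are precisely those interlacing $\Lambda$:
\[
c_1\ge d_1\ge c_2\ge d_2\ge\dots\ge c_{p+1}\ge d_{p+1}\ge 0,\qquad d_i=0\ (i>p+1).
\]
The crucial observation is that such a $\mu$ either equals $\mu_0$ or else is \emph{strictly shorter}, i.e.\ $\ell(\mu)\le p$: indeed, since $d_{p+1}\le c_{p+1}$ and $d_1\le c_1$, the only way to have $d_{p+1}=c_{p+1}$ while also respecting all the interlacing inequalities and keeping $\sum d_i=\sum c_i$ is to take $d_i=c_i$ for all $i$. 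Hence in the sum above, for $\mu\ne\mu_0$ we have $\ell(\mu)\le p$, so $n_{\Gamma_1}(\pi_{\sigma_{\mu,\pm\kappa},r})=n_{\Gamma_2}(\pi_{\sigma_{\mu,\pm\kappa},r})$ by hypothesis; combined with the assumed $\tau_{\mu,\delta}$-isospectrality (applied with $\mu$ ranging over the weights of length $p+1$, in particular $\Lambda$ itself, for both choices of sign of $\delta$ when $n$ is odd, or for the single relevant $\delta$ when $n$ is even), we conclude $n_{\Gamma_1}(\pi_{\sigma_{\mu_0,\pm\kappa_0},r})=n_{\Gamma_2}(\pi_{\sigma_{\mu_0,\pm\kappa_0},r})$ for all $r>0$. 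Since $\mu_0$ with $\ell(\mu_0)=p+1$ was arbitrary, this gives the claim for every $\pi\in\widehat G(\sigma_{\mu,\kappa})$ with $\ell(\mu)=p+1$.

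I expect the main obstacle to be the careful bookkeeping of the sign parameter $\kappa$ (and $\delta$): one must verify that the interlacing weight $\Lambda$ we build genuinely has length $p+1$ and that the branching really does select a single value of $\kappa$ for each $\mu$ (as asserted in the multiplicity-free remark following Theorem~\ref{thm2:branching-odd}), so that the identity relating $d_{4\pi^2r^2}(\tau_{\Lambda,\pm\delta},\Gamma_i)$ to the $n_{\Gamma_i}(\pi_{\sigma_{\mu_0,\pm\kappa_0},r})$ is clean. The even versus odd $n$ cases need slightly different handling of which $\delta$'s are available, but in both cases the structure of the argument — isolating $\mu_0$ as the unique ``top'' weight of its length in the branching, then using the inductive hypothesis to kill the shorter tails — is identical to Lemma~\ref{lem3:lemma}. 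The genuinely new input is only the combinatorial lemma that $\ell(\mu)=\ell(\Lambda)$ forces $\mu=\Lambda|_{\SO(n-1)}$ within the interlacing pattern, which is elementary.
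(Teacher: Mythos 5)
Your argument has a genuine gap at its ``crucial observation''. It is not true that every $\sigma_{\mu,\kappa}$ occurring in $\tau_{\Lambda,\delta}|_M$ with $\mu\neq\mu_0$ satisfies $\ell(\mu)\leq p$. The interlacing conditions in Theorems~\ref{thm2:branching-even} and \ref{thm2:branching-odd} impose no constraint of the form $\sum_i d_i=\sum_i c_i$ (there is no such conservation of the sum of coordinates in orthogonal branching), and moreover $\ell(\mu)=p+1$ only requires $d_{p+1}>0$, not $d_{p+1}=c_{p+1}$. Concretely, take $p=1$ and $\Lambda=2\varepsilon_1+\varepsilon_2$ (any $n\geq 5$): the branching weights are all $\mu=d_1\varepsilon_1+d_2\varepsilon_2$ with $2\geq d_1\geq 1\geq d_2\geq 0$, so $\mu=\varepsilon_1+\varepsilon_2$ occurs, has length $p+1=2$, and differs from $\mu_0$. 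Hence in your identity for $d_{4\pi^2r^2}(\tau_{\Lambda,\pm\delta},\Gamma_i)$ the tail is not controlled by the hypothesis ``equality of multiplicities for $\ell(\mu)\leq p$'' alone, and the one-step peeling argument collapses for every $\Lambda$ of length $p+1$ except the minimal one $\varepsilon_1+\cdots+\varepsilon_{p+1}$ (for which the interlacing forces $d_1=\cdots=d_p=1$ and $d_{p+1}\in\{0,1\}$, so the only other constituent has length $p$).

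This is exactly why the paper does not argue weight-by-weight in one shot but instead runs an induction \emph{inside} the set of length-$(p+1)$ weights, using Lemma~\ref{lem3:lemma}: it starts with $\mu_0=\varepsilon_1+\cdots+\varepsilon_{p+1}$ (your argument is correct precisely for this base case), then treats the weights $\mu_0=\sum_{i=1}^{p}b_i\varepsilon_i+\varepsilon_{p+1}$ by successive applications of Lemma~\ref{lem3:lemma} following the order $<$, and finally inducts on the last coordinate $b_{p+1}$. The point is that Lemma~\ref{lem3:lemma} allows, among the already-handled representations, weights $\mu<\mu_0$ of the \emph{same} length $p+1$, which is what absorbs the extra constituents your observation tried to rule out. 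To repair your proof you would have to introduce this ordering of the length-$(p+1)$ weights (or an equivalent induction on, say, $\sum_i c_i$), at which point you recover the paper's argument; as written, the claimed combinatorial lemma is false and the proof does not go through.
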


\begin{proof}
We begin by considering the first case $\mu_0=\varepsilon_1+\dots+\varepsilon_{p+1}\in\mathcal P(\SO(n))$.
Every $\mu\in\mathcal P(\SO(n))$ such that $\mu <\mu_0$ satisfies $\ell(\mu)\leq p$, thus $n_{\Gamma_1}(\pi)=n_{\Gamma_2}(\pi)$ for every $\pi\in\cup_{\mu<\mu_0}\widehat G(\sigma_{\mu,\kappa})$.
Hence $n_{\Gamma_1}(\pi)=n_{\Gamma_2}(\pi)$ for every $\pi\in\widehat G(\sigma_{\mu_0,\kappa})$ by Lemma~\ref{lem3:lemma}.

We continue in this fashion obtaining that $n_{\Gamma_1}(\pi)=n_{\Gamma_2}(\pi)$ for every $\pi\in\widehat G(\sigma_{\mu_0,\kappa})$ for any $\mu_0=\sum_{i=1}^{p} b_i \varepsilon_i + \varepsilon_{p+1}\in\mathcal P(\SO(n))$ since the ordering $<$ is complete.
We now proceed by induction on $b_{p+1}$, and the proof is complete.
\end{proof}

By proceeding by induction on $p$ with repeated applications of Lemma~\ref{lem3:lemma2}, we have that \eqref{eq3:star} holds for every $\pi\in\widehat G(\sigma_{\mu,\kappa})$ for any $\mu\in\mathcal P(\SO(n))$.
This completes the proof.
\end{proof}

\bibliographystyle{plain}

\begin{thebibliography}{LMR12}

\bibitem[Go09]{Go}
    C.S. Gordon,
    \textit{Sunada's isospectrality technique: two decades later},
    in \textsl{Spectral analysis in geometry and number theory}, 45--58, Contemp. Math. \textbf{484}, Amer. Math. Soc., Providence, RI, 2009.

\bibitem[LMR12]{LMR}
    E. Lauret, R. Miatello \and  J.P. Rossetti,
    \textit{Representation equivalence and p-spectrum of constant curvature space forms},
    to appear in \textsl{J. Geom. Anal.}; arXiv:1209.4916 [math.SP] (2012).

\bibitem[Pe95]{Pe1}
    H. Pesce,
    \textit{Vari\'et\'es hyperboliques et elliptiques fortement isospectrales},
    J. Funct. Anal. \textbf{133} (1995), 363--391.

\bibitem[Pe96]{Pe2}
    H. Pesce,
    \textit{Repr\'esentations relativement \'equivalentes et vari\'et\'es riemanniennes isospectrales},
    Comment. Math. Helv. \textbf{71} (1996), 243--268.

\bibitem[Wa73]{Wa}
    N.R. Wallach,
    \textit{Harmonic analysis on homogeneous spaces},
    Pure and Applied Mathematics \textbf{19}. Marcel Dekker, Inc., New York, 1973.
\end{thebibliography}

\end{document}